\documentclass[12pt]{amsart}
\usepackage{amsmath,amssymb,amscd,amsthm, enumerate}

\def\F{\mathbb F}
\def\Z{\mathbb Z}
\def\Q{\mathbb Q}
\def\AF{\overline{\F}}
\def\P{\mathbb{P}}

\def\T{\textrm{Tr}}

\def\Gal{\textrm{Gal}}
\def\L{\mathcal{L}}

\theoremstyle{plain}

\newtheorem{theorem}{Theorem}[section]

\newtheorem{cor}[theorem]{Corollary}
\newtheorem{ques}[theorem]{Question}

\author{Amanda Knecht}
\address{Department of Mathematics and Statistics\\
Villanova University \\
Villanova, PA 19085}
\email{Amanda.Knecht@villanova.edu}

\title[Degree of Unirationality for del Pezzo Surfaces over $F_q$]{Degree of Unirationality for del Pezzo Surfaces over Finite Fields}
\begin{document}
  \maketitle
  \begin{abstract}   We address the question of the degree of unirational parameterizations of degree four and degree three del Pezzo  surfaces.  Specifically we show that degree four del Pezzo surfaces over finite fields admit degree two parameterizations and minimal cubic surfaces admit parameterizations of degree 6.  It is an open question whether or not  minimal cubic surfaces over finite fields can admit degree 3 or 4 parameterizations.\end{abstract}
  \section{ Introduction}

It is a classical result that for every cubic surface $S_3$ defined over an algebraically closed field there exists a degree one rational map $\P^2 \dashrightarrow S_3$.   We say that such a surface is \textit{rational}.   But over a non-algebraically closed field there are many examples of non-rational cubic surfaces.   A surface $S_3$ is \textit{unirational} if there exists a finite to one rational map $\P^2 \dashrightarrow S_3$.
 In 1943 Segre proved that a smooth cubic surface defined  over $\Q$ is unirational if and only if it contains a rational point \cite{MR0009471}.     Manin then showed that the same is true for cubic surfaces over finite fields containing at least  34 elements \cite{MR833513}.    More recently Koll\'{a}r proved that over an arbitrary field a cubic hypersurface with a rational point is always unirational \cite{MR1956057}.  
 Cubic surfaces over finite fields always have points \cite{Chevalley} so are unirational.
 The aim of this note is to describe the possible degrees of the unirational maps to non-rational cubic surfaces over finite fields.  Let $\F_q$ be a finite field of size $q=p^r$.

\begin{theorem}\label{thm1}  Let $S_3$ be a non-rational cubic surface defined over a finite field $\F_q$.  If there exists a degree two rational map $\P^2 \dashrightarrow S_3$ and $q\neq 2$, then $S_3$ contains a line defined over $\F_q$.  If $S_3$ does not contain a line defined over $\F_q$, then there exists a degree 6 unirational parameterization of $S_3$.
\end{theorem}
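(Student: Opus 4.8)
I sketch a proof of the second assertion: a smooth cubic surface $S_3$ over $\F_q$ with no line defined over $\F_q$ admits a degree--$6$ unirational parameterization. The plan is to run the tangent--plane construction of Segre and Manin and to compute the degree of the resulting map by a polar count. Write $F$ for a cubic form defining $S_3$; for $P\in S_3$ let $T_PS_3=\{x\in\P^3: \sum_i(\partial F/\partial x_i)(P)\,x_i=0\}$ be the tangent plane and $\Gamma_P:=T_PS_3\cap S_3$ the tangent--plane section at $P$.

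\emph{Step 1: a point in general position.} The generic $\Gamma_P$ is an irreducible plane cubic with a single node at $P$, so the locus $Z\subset S_3$ of points failing this (it contains the $27$ lines and the parabolic curve, and one enlarges it to exclude the finitely many further degeneracies used below) is a proper closed subset, hence a curve, with $\#Z(\F_q)=O(q)$. Since $\#S_3(\F_q)=q^2+tq+1$ for an integer $t$ with $|t|\le 7$, there is a rational point $P_0\in(S_3\setminus Z)(\F_q)$ once $q$ is not too small; fix such a $P_0$. Producing $P_0$ uniformly in $q$ --- in particular for the few small $q$ for which the above estimate is too weak --- is the most delicate point and, I expect, the heart of the theorem; I anticipate it needs a separate argument.

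\emph{Step 2: a dominant $\F_q$-rational family of rational curves.} Put $\Pi_0:=T_{P_0}S_3$, so $\Gamma_0:=\Gamma_{P_0}=\Pi_0\cap S_3$ is an irreducible plane cubic with one node. Its normalization $\widetilde\Gamma_0$ is a smooth genus--$0$ curve over $\F_q$, hence a conic, hence $\widetilde\Gamma_0\cong\P^1_{\F_q}$ (a quadratic form in three variables over $\F_q$ has a nontrivial zero); fix an isomorphism $t\mapsto P(t)$. For generic $t$, $P(t)\notin Z$, so $\Gamma_{P(t)}$ is an irreducible nodal cubic with node $P(t)$. The line $\ell(t):=\Pi_0\cap T_{P(t)}S_3$ contains $P(t)$ (it lies in $\Pi_0$, and $\Gamma_0\ni P(t)$) and lies in $T_{P(t)}S_3$, so it meets $S_3$ at the node $P(t)$ with multiplicity $\ge 2$; hence $\ell(t)\cdot S_3=2P(t)+Q(t)$, with $Q(t)$ a smooth $\F_q(t)$-rational point of $\Gamma_{P(t)}$ (for generic $t$ one has $\ell(t)\not\subset S_3$ and $Q(t)\neq P(t)$). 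Therefore $\widetilde\Gamma_{P(t)}$ is a genus--$0$ curve over $\F_q(t)$ with a rational point, so $\widetilde\Gamma_{P(t)}\cong\P^1_{\F_q(t)}$. Letting $t$ vary produces an $\F_q$-surface $X$ --- a conic bundle over $\widetilde\Gamma_0\cong\P^1$ with section $Q$, so $\F_q(X)=\F_q(t,s)$ and $X$ is $\F_q$-rational --- together with a rational map $\Phi\colon X\dashrightarrow S_3$ over $\F_q$ whose image is the union of the non-constant family $\{\Gamma_{P(t)}\}$, hence is $2$-dimensional and, being irreducible inside $S_3$, equal to $S_3$. Thus $\Phi$ is dominant, and composing with an $\F_q$-birational map $\P^2\dashrightarrow X$ gives a unirational parameterization $\P^2\dashrightarrow S_3$.

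\emph{Step 3: the degree is $6$.} For general $R\in S_3$ the fiber $\Phi^{-1}(R)$ is in bijection with $\{t: R\in\Gamma_{P(t)}\}$ (each such curve is rationally parameterized and $R$ is a smooth point of it), i.e.\ with $\{P\in\Gamma_0: R\in T_PS_3\}$. Now $R\in T_PS_3$ is the equation $\sum_i R_i(\partial F/\partial x_i)(P)=0$, which is homogeneous quadratic in $P$: it cuts out the polar quadric $Q_R$ of $R$. For general $R$ one has $\Pi_0\not\subset Q_R$, so $Q_R\cap\Pi_0$ is a conic in the plane $\Pi_0$, meeting the cubic $\Gamma_0$ transversally in $2\cdot 3=6$ points. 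Hence $\deg\Phi=6$, and composed with the birational $\P^2\dashrightarrow X$ this is the asserted degree--$6$ parameterization. (The count yields $6$, not $2$, consistently with the first assertion, which for $q\neq 2$ forbids a degree--$2$ parameterization of a line--free $S_3$; in characteristics $2$ and $3$ one should additionally check that $\Phi$ is separable, so that $6$ is the degree of the function--field extension. The geometry of Steps 2 and 3 is essentially formal --- the real obstacle is Step 1.)
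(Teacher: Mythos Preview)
Your sketch treats only the second assertion, and within that it reproduces Manin's classical tangent-plane construction (which the paper simply cites rather than reproves) together with a clean degree computation via polar quadrics. That part is fine. The difficulty, as you yourself flag in Step~1, is to produce the base point $P_0$ over \emph{every} finite field, and you explicitly leave this open: ``Producing $P_0$ uniformly in $q$ \ldots\ I anticipate it needs a separate argument.'' But this is exactly the new content of the theorem beyond what was already in Manin, so as written the proposal has a genuine gap at the one place that matters.

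The paper closes this gap in two stages. First, for $q\ge 5$ it argues arithmetically rather than with a crude $O(q)$ bound: if $S_3$ is minimal and every $\F_q$-point lies on an exceptional curve, then in fact every $\F_q$-point must be an Eckardt point (a point on exactly one or two lines would force an $\F_q$-line on $S_3$). Comparing the Weil count $\#S_3(\F_q)=q^2+q\,\T F^*+1$ with $\T F^*\in\{-2,-1,0,1,2\}$ against the known maximal numbers of Eckardt points rules this out once $q\ge 5$. Your locus $Z$ is strictly larger than the exceptional locus (you also exclude the parabolic curve and further degeneracies), so even for moderate $q$ your estimate is weaker than what is needed; the Eckardt-point refinement is what makes the argument effective. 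Second, for the residual fields $\F_2,\F_3,\F_4$ the paper invokes Swinnerton-Dyer's explicit classification of minimal cubic surfaces all of whose rational points are Eckardt points, and for the finitely many survivors it switches to Koll\'ar's variant of the construction: instead of a single $\F_q$-point one uses a Galois-conjugate pair $s,s'\in S_3(\F_{q^2})$ off the exceptional locus (whose existence is checked by a direct count over $\F_{q^2}$), and the third-point map built from $C_s,C_{s'}$ again has degree $6$. None of this is present in your sketch.

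You do not address the first assertion at all. The paper proves it by an entirely different route: the Bayle--Beauville classification of minimal rational surfaces with involution, combined with the relation $2K_T^2=K_S^2-2K_S\cdot\sum B_\ell+\sum B_\ell^2$ for the quotient $T=S/\langle\sigma\rangle$ and case-by-case intersection computations, shows that no minimal cubic surface in odd characteristic can be the target of a degree-$2$ map from a rational surface.
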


  Fix an algebraic closure  $\AF_q$, let  $G=$ Gal $(\AF_q, \F_q)$ and   $X$ be a smooth  surface defined over $\F_q.$  Let $N(X)=$ Pic $(X \otimes \AF_q)$.
In his book \textit{Cubic Forms}, Manin proves that there is always a degree six parameterization of a cubic surface over large enough fields of characteristic different from two.   He suggests that his lower bound of 35 for the size of the field may not be optimal but gives an example over $\F_4$  where his proof falls apart.   He also proves  that if there exist a rational map of finite degree $\varphi: \P^2 \dashrightarrow X$, then the degree of $\varphi$ is divisible by the least common multiple of the exponents of the groups $H^1(H,N(X))$ for all possible closed subgroups $H \subseteq G$ \cite{MR833513}.

By Theorem \ref{thm1}, we know that a minimal cubic surface never has a rational parameterization of degree 2 but always has one of degree 6.    The 27 lines on a cubic surface are acted on by the Weyl group of $E_6$, $W(E_6)$.   There are twenty-five conjugacy classes in this group which Frame denotes $C_1, \ldots, C_{25}$ \cite{MR0047038}.  For each class $C_i$, let $H$ be the cyclic subgroup in $W(E_6)$ generated by some element of the class.  Manin calculates  $H^1(H, N(S_3))$ for each class and places them in a table on pages 176-177 of the book \textit{Cubic Forms}   \cite{MR833513}.  There are two mistakes in his table.  In Corollary 1.17 Urabe proved that the numbers $h^1$ are all square \cite{MR1322894}, so the $H^1$ for classes $C_4$ and $C_{20}$ are 0 instead of $\Z_2$.  Noting this correction to the chart, we find that the only possible nonzero $H^1$ groups are $\Z_2 \times \Z_2$ and $ \Z_3 \times \Z_3$.  A correct table can be found in a paper by Shuijing Li \cite{0904.3555}.

There are two questions left open concerning the degree of the parameterization:

\begin{ques}If $S_3$ is a minimal cubic surface such that $$H^1(\Gal(\AF_q,K),N(S_3))= \Z_2 \times \Z_2$$ for some algebraic  extension $K$ of $\F_q$, does there exist a degree 4 unirational parameterization of $S_3$?
\end{ques}
 
\begin{ques} If $S_3$ is a minimal cubic surface such that $$H^1(Gal(\AF_q,K),N(S_3))= \Z_3 \times \Z_3$$ for some algebraic  extension $K$ of $\F_q$, does there exist a degree 3 unirational parameterization of $S_3$?
  \end{ques}
  
It should be noted that if $S$ is a degree $d$ del Pezzo surface  defined over a finite field $k$, then $S$ is rational when $d\geq 5$ and unirational of degree two when $d=4$ and $|k|>22$ \cite{MR0225780} \cite{MR833513}.  In the following section we extend the result for degree four del Pezzo surfaces to all finite fields.
There has also been recent progress in the study of degree 2 del Pezzo surfaces over finite fields. Salgado, Testa, and V‡rilly-Alvarado prove that degree two del Pezzo surfaces over finite fields are unirational except for possibly three exceptional cases   \cite{1304.6798}.\\

\noindent \textbf{Acknowledgments:} I am grateful to Brendan Hassett for the many conversations we had about this topic.
\section{Degree 2 Maps for degree 4 del Pezzo Surfaces}
Over an algebraically closed field, a degree four del Pezzo surface $S_4$ is the blow up of the projective plane at five points, no three of which are collinear.   Such a surface may also be thought of as a rational conic bundle with four singular fibers.   The four singular fibers consist of pairs of exceptional curves intersecting transversely at a point.    Manin showed that  a degree four del Pezzo surface is unirational of degree two over a finite field $k$  as long as $|k|>22$ \cite{MR833513}.  The field is required to contain 22 elements because his rational map uses a $k$-rational point on the surface that is not on any exceptional curves.  Points on degree four del Pezzo surfaces can be contained in at most two exceptional curves.    Below we give a proof that there is a degree two rational map over any finite field.

\begin{theorem}\label{Thm2}  Suppose $S_4$ is a degree four del Pezzo surface defined over a finite field $k$.  Then there exists a unirational parameterization of $S_4$ of degree 2 defined over $k$.
\end{theorem}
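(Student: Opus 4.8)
The plan is to stratify $S_4$ according to the $G$-module structure of $N(S_4)$, and — the key point — to replace Manin's requirement of an $\F_q$-point lying off \emph{all} sixteen lines by a much weaker general position hypothesis.

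\emph{The split case and a dichotomy.} If $S_4$ contains a line defined over $\F_q$, contracting it realizes $S_4$ as the blow-up, at an $\F_q$-point, of a del Pezzo surface of degree $5$ over $\F_q$, which is rational \cite{MR0225780}; then $S_4$ is $\F_q$-rational, and composing a birational map $\P^2 \dashrightarrow S_4$ with a degree two dominant rational self-map of $\P^2$ produces a degree two parameterization. So assume $S_4$ carries no $\F_q$-line. Since $-K_{S_4}$ is ample, running the minimal model program over $\F_q$ gives a $K$-negative extremal contraction whenever the rank of $N(S_4)^G$ exceeds $1$; this contraction cannot be birational, since a birational one would blow down a Galois orbit of pairwise disjoint lines onto a del Pezzo surface of degree $\geq 5$ over $\F_q$ — rational by \cite{MR0225780} — birational to $S_4$. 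Hence $S_4$ either admits a conic bundle structure $\pi\colon S_4 \to \P^1$ defined over $\F_q$, or is minimal over $\F_q$. Throughout we use that $S_4(\F_q) \neq \emptyset$, by Chevalley--Warning applied to the pencil of quadrics defining $S_4 \subset \P^4$ \cite{Chevalley}, and that $|S_4(\F_q)| = q^2 + tq + 1$ with $t = \operatorname{tr}(\mathrm{Frob} \mid N(S_4)) \in \Z$ and $|t| \leq 6$.

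\emph{The conic bundle case.} Let $F$ be the fibre class of $\pi$. Then $(-K_{S_4} - F)^2 = 0$ and $-K_{S_4}\cdot(-K_{S_4}-F) = 2$, so $-K_{S_4} - F$ is the fibre class of a second conic bundle $\pi'\colon S_4 \to \P^1$, again defined over $\F_q$. Fix $P \in S_4(\F_q)$ and put $C := \pi'^{-1}(\pi'(P))$, an $\F_q$-rational conic containing $P$; if $C$ is a smooth fibre of $\pi'$ then $C \cong \P^1_{\F_q}$. Since $F \cdot (-K_{S_4}-F) = 2$, the map $\pi|_C\colon C \to \P^1$ has degree $2$, so the fibre product $Y := S_4 \times_{\P^1,\pi} C$ (or a resolution of it) maps to $S_4$ with degree $2$, while the diagonal $c \mapsto (c,c)$ is a section of the conic bundle $Y \to C \cong \P^1$; hence $Y$ is $\F_q$-rational, and $\P^2 \dashrightarrow Y \to S_4$ is the desired parameterization. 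The one constraint is that $P$ avoid the eight lines composing the four singular fibres of $\pi'$; as none of these is $\F_q$-rational, the only $\F_q$-points on them are the at most four $\F_q$-rational nodes $L \cap L^\sigma$ of singular fibres split by a quadratic extension. Thus a valid $P$ exists as soon as $|S_4(\F_q)| > 4$, which the Weil estimate guarantees for all but a few small $q$.

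\emph{The minimal case and the main obstacle.} Now $N(S_4)^G$ has rank $1$ and no line is $\F_q$-rational; here I invoke Manin's degree two construction \cite{MR833513}, which needs only an $\F_q$-point $P$ off all sixteen geometric lines. Because a point of $S_4$ lies on at most two lines and Frobenius permutes the lines through a fixed $\F_q$-point, minimality forces any bad $P$ onto a node $L \cap L^\sigma$ coming from a Frobenius orbit of size two among the lines; there are at most eight such nodes, so a good $P$ exists once $|S_4(\F_q)| > 8$. The hard part — and the reason Manin's bound of $22$ must be beaten by hand rather than by an estimate — is the remaining surfaces over the smallest $\F_q$, where the Weil bound is too coarse. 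Here one uses that, since $-K_{S_4}$ is Frobenius-fixed, $t$ lies in a short explicit list, and that the bad locus can only reach size $|S_4(\F_q)|$ when Frobenius has many short orbits on the lines, which drives $t$ so negative that $q^2 + tq + 1 \leq 0$ in all but finitely many Galois types; those surviving types are then settled one by one, either by exhibiting a point off the lines or a direct degree two map. This finite verification is where the substance of the theorem lies.
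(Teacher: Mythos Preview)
Your outline is coherent but it is not a proof: you explicitly defer the decisive step --- the surfaces over small $\F_q$ where $|S_4(\F_q)|$ fails to exceed your thresholds $4$ (conic-bundle case) or $8$ (minimal case) --- to an unspecified ``finite verification,'' and you never carry it out. Since, as you yourself concede, ``this finite verification is where the substance of the theorem lies,'' what you have written is a reduction rather than an argument. A smaller slip: ``this contraction cannot be birational'' is false as stated; a Galois orbit of $\geq 2$ pairwise disjoint lines can certainly be contracted even when no single line is $\F_q$-rational. What you mean is that if the contraction \emph{is} birational then $S_4$ is rational and you are already done --- say that.

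The paper closes exactly the gap you leave open, and by a device that sidesteps any case-by-case check. Instead of stratifying by the Galois type of the surface, it stratifies by the type of available $k$-point: (i) some $k$-point lies off all sixteen lines (Manin's construction applies directly); (ii) some $k$-point lies on exactly one line (that line is then $k$-rational, contract it, reach a rational degree-five del Pezzo); (iii) every $k$-point lies on exactly two lines. In case (iii) the two lines through a chosen $k$-point are quadratic conjugates, and the resulting configuration produces two $k$-rational conic-bundle structures on $S_4$; a smooth $k$-fibre of one bundle is a bisection of the other and yields the degree-two parameterization --- this is precisely your conic-bundle argument. If neither bundle has a usable smooth fibre, then every fibre over $\P^1(k)$ is singular, and since each bundle has only four singular fibres this forces $q=3$. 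But then one counts $8$ rational points on $S_4$, whereas Chevalley--Warning gives $|S_4(\F_3)|\equiv 1 \pmod 3$, a contradiction. So the residual small-field cases dissolve via a congruence obstruction, not via an enumeration of Frobenius conjugacy classes.
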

\begin{proof}
There are three cases to consider.
\begin{enumerate}[(i)]
\item There exists a point $x\in S_4(k)$ not contained in an exceptional line of $S_4$.
\item All the points in $S_4(k)$ are contained in exceptional lines but there is a point contained in exactly one exceptional line.
\item  All $k$-rational points are the intersection of two exceptional curves.  
\end{enumerate}
The first case is proven in Manin's book.  
In the second case, suppose we have a $k$-rational point $x$ contained on exactly one line on the surface.  Then this line is defined over $k$ and we can blow it down to produce a birational map to a degree 5 del Pezzo surface.  These are known to be rational. 

Finally, if $x$ is contained in two exceptional curves $L_1$ and $L_2$, then their union is defined over a quadratic extension $k'$ of $k$.  There are two other lines $L_3, L_4$ defined over $k'$ such that $(L_1, L_3)=(L_2, L_4)= (L_3, L_4)=1$.  Each of these pairs of lines yields a conic bundle with four degenerate fibers.  A smooth fibre in one bundle is a bisection of the other.  As long as one of the bundles has a smooth fibre, there is a degree two unirational parameterization of $S_4$ defined over $k$.  So we suppose by way of contradiction that there are not any smooth fibers in either bundle.  Then each bundle has exactly four $k$-rational fibers; each is a union of two lines.  Over a field of size $q$, the projective line has $q+1$ points.  Thus, the field $S_4$ must be defined over is 
$\F_3$.  
 The Chevalley-Warning theorem tells us that the number of points on a geometrically rational surface over a field of characteristic $p$ is congruent to 1 modulo $p$ \cite{Chevalley}\cite{Warning}. If $k=\F_3$, then there are 8 $k$-rational points on $S_4$ but 8 is congruent to 2 modulo 3.  
\end{proof}
A corollary of this result is the following fact that seems to be known to experts for any smooth cubic hypersurface containing a line.
\begin{cor}  Suppose $S_3$ is a cubic surface defined over a field $k$.  Suppose further that $S_3$ contains a line defined over $k$.  Then there exists a unirational parameterization of $S_3$ of degree 2 defined over $k$.
\end{cor}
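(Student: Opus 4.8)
The plan is to realize $S_3$ as the image, under an explicit degree-$2$ rational map, of a conic bundle over $\P^1$ which carries a $k$-rational section and is therefore $k$-rational; precomposing with a birational parametrization of that conic bundle by $\P^2$ then produces the desired degree-$2$ map. Write $S_3=V(F)\subset\P^3$ for a cubic form $F$ over $k$ and let $L\subset S_3$ be the given line. For $x\in L$ let $T_xS_3$ be the embedded tangent plane. Since $L$ is a line on $S_3$ through $x$ we have $L\subset T_xS_3$, and the plane cubic $T_xS_3\cap S_3$ is singular at $x$, so it decomposes as $L\cup C_x$ with $C_x$ a residual conic; as $L$ is smooth at $x$, the singularity of $L\cup C_x$ at $x$ forces $x\in C_x$. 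All of this is defined over $k$ and varies algebraically as $x$ runs over $L\cong\P^1_k$.

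First I would form the incidence variety $\mathcal C=\{(x,y):x\in L,\ y\in C_x\}$ with its two projections $\pi\colon\mathcal C\to L\cong\P^1_k$ and $f\colon\mathcal C\to S_3$, $(x,y)\mapsto y$. The morphism $\pi$ is a conic bundle whose generic fibre is the smooth conic $C_x$ (that the generic $C_x$ is irreducible uses only that $S_3$ has finitely many lines), and $x\mapsto(x,x)$ is a section defined over $k$. Hence the generic fibre of $\pi$ is a smooth conic over $k(L)\cong k(t)$ possessing a rational point, so it is isomorphic to $\P^1_{k(t)}$; consequently $\mathcal C$ is birational over $k$ to $\P^1_k\times\P^1_k$, and therefore to $\P^2_k$.

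It then suffices to show $\deg f=2$, since $\P^2_k\dashrightarrow\mathcal C\xrightarrow{\,f\,}S_3$ is then a degree-$2$ unirational parametrization. For a general $y\in S_3$ off $L$ one has $y\in C_x\iff y\in T_xS_3\iff\sum_i\partial_iF(x)\,y_i=0$; parametrizing $L$ by a linear embedding $[s:t]\mapsto x(s,t)$, the partials $\partial_iF$ pull back to binary quadratics $q_i(s,t)$, so $f^{-1}(y)$ is cut out on $L$ by the single binary quadratic $\sum_i y_i q_i(s,t)$, which for general $y$ has two distinct roots.

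The one point I expect to require genuine care is to rule out that $\sum_i y_i q_i$ degenerates for \emph{every} $y$, which would make $\deg f<2$. Because $L\subset T_xS_3$ for all $x$, the restricted Gauss map $x\mapsto T_xS_3$ sends $L$ into the pencil of planes through $L$, a line in $(\P^3)^\vee$; equivalently $q_0,\dots,q_3$ span an (at most) two-dimensional space of binary quadratics, say $\langle p_0,p_1\rangle$, and $\deg f$ is the degree of the map $[s:t]\mapsto[p_0(s,t):p_1(s,t)]$. If that map were constant or of degree $1$, then $p_0,p_1$ — hence all the $q_i$ — would share a common linear factor $\ell(s,t)$, and at the zero of $\ell$ on $L$ every partial $\partial_iF$ would vanish, contradicting the smoothness of $S_3$. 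So the map has degree exactly $2$, the generic $C_x$ is a genuine conic, and $\deg f=2$. (When $k$ is finite one may alternatively contract the $k$-line $L$ to a degree-four del Pezzo surface over $k$ and apply Theorem~\ref{Thm2}; the tangent-plane construction above has the merit of working over an arbitrary field and in any characteristic.)
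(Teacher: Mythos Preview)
Your argument is correct and takes a genuinely different route from the paper. The paper's proof is two lines: contract the $k$-rational line $L$ to obtain a degree-four del Pezzo surface and invoke Theorem~\ref{Thm2}. That reduction is slick but, since Theorem~\ref{Thm2} is proved only over finite fields, it literally establishes the corollary only in that setting (the paper remarks beforehand that the general statement ``seems to be known to experts''). Your tangent-plane/residual-conic construction is the classical direct argument: it works over an arbitrary field and in any characteristic, at the cost of having to verify carefully that the Gauss map $L\to(\text{pencil of planes through }L)$ has degree exactly $2$, which you do via the smoothness of $S_3$ ruling out a common linear factor among the restricted partials. You already flagged the paper's alternative in your closing parenthetical, so you are aware of both approaches; the main point to record is that your proof actually delivers the corollary in the generality in which it is stated, whereas the paper's proof, read literally, does not.
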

\begin{proof}
If $S_3$ contains a line defined over $k$, we can blow the line down to get a birational map from $S_3$ to a degree four del Pezzo surface.  Then we apply Theorem \ref{Thm2}.
\end{proof}

  \section{Degree 2 Maps for Cubic Surfaces}
  We saw in the previous section that a cubic surface $S_3$ defined over a finite field $k$ will have a degree two unirational parameterization if the surface contains a line defined over $k$.  In this section we show that having a line defined over $k$ is actually a necessary condition for  being unirational of degree two when the field is of odd characteristic.  This result is a direct consequence of the following theorem of Bayle and Beauville.

Let $k$ be an algebraically closed field of odd characteristic and $S$ a smooth, projective, connected surface over $k$.  Also let $\sigma$ be a biregular involution of $S$.  We say $(S, \sigma)$ is minimal if each exceptional curve $E$ on $S$ satisfies $\sigma E \neq E$ and $E \cap  \sigma E = \emptyset$.
  \begin{theorem}[Bayle, Beauville  \cite{MR1802909}]
  Let $(S,\sigma)$ be a minimal pair. One of the following holds: 
  \begin{itemize}
  \item [(i)] There exists a smooth $\P^1$-fibration $f : S \rightarrow \P^1$ and a non-trivial involution
$\tau$ of $\P^1$ such that $f \circ \sigma = \tau \circ f$
\item[(ii)] There exists a fibration $f:S \rightarrow \P^1$ such that $f \circ \sigma = f$; the smooth fibers of
$f$ are rational curves, on which $\sigma$ induces a non-trivial involution; any singular fibre is the union of two rational curves exchanged by $\sigma$, meeting at one point.
\item[(iii)] $S$ is isomorphic to $\P^2$  with linear involution $\sigma$.
\item[(iv)] $S$ is isomorphic to $\P^1 \times \P^1$ with the involution $(x, y) \mapsto (y,x)$. 
\item[(v)] $S$ is a Del Pezzo surface of degree 2 and $\sigma$ the Geiser involution. 
\item[(vi)] $S$ is a Del Pezzo surface of degree 1 and $\sigma$ the Bertini involution.
\end{itemize} 
  \end{theorem}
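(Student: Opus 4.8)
The plan is to run the $G$-equivariant minimal model program for the order-two group $G=\langle\sigma\rangle$ and then classify the resulting $G$-minimal surfaces case by case; as in the setting of \cite{MR1802909} I assume $S$ is rational (birational to $\P^2$) and $\sigma$ nontrivial. The starting observation is that the hypothesis that $(S,\sigma)$ is minimal says precisely that $S$ carries no $\sigma$-invariant exceptional curve and no pair of disjoint exceptional curves interchanged by $\sigma$; equivalently, every $\sigma$-equivariant birational morphism out of $S$ is an isomorphism, so $(S,\sigma)$ is a minimal $G$-surface. By the classification of minimal rational $G$-surfaces (Manin, Iskovskikh), one of two things then holds: \textbf{(A)} there is a $\sigma$-equivariant conic bundle $f\colon S\to\P^1$ with $\Pic(S)^{\sigma}$ of rank two, or \textbf{(B)} $S$ is a del Pezzo surface with $\Pic(S)^{\sigma}=\Z$. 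The proof then splits into these two cases.

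For case \textbf{(B)}, write $d=K_S^2$. Since $\sigma^{*}K_S=K_S$ and $\Pic(S)^{\sigma}$ has rank one, $\Pic(S)^{\sigma}\otimes\Q=\Q K_S$, so $\sigma$ acts on the rank-$(9-d)$ lattice $K_S^{\perp}$ as an involution with no $+1$-eigenvalue, that is, as $-\mathrm{id}$. If $d=9$ this is vacuous and $\sigma$ is linear because $\mathrm{Aut}(\P^2)=\mathrm{PGL}_3$: this is (iii). If $d=8$, the surface $\mathbb{F}_1$ has a unique exceptional curve, which would then be $\sigma$-invariant, contradicting minimality, so $S\cong\P^1\times\P^1$; there, acting as $-\mathrm{id}$ on the rank-one lattice $K_S^{\perp}$ forces $\sigma$ to exchange the two rulings, and any such involution is conjugate to $(x,y)\mapsto(y,x)$: this is (iv). If $3\le d\le 7$, the operator $-\mathrm{id}$ on $K_S^{\perp}$ is not induced by any automorphism of $S$: it does not preserve the set of classes of exceptional curves, i.e.\ it is not an element of the relevant group of lattice isometries (the Weyl groups of $A_1$, $A_1\times A_2$, $A_4$, $D_5$, $E_6$ in degrees $7,6,5,4,3$ all fail to contain $-\mathrm{id}$ as an operator on $K_S^{\perp}$); so this case does not occur. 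Finally, for $d=2$ and $d=1$ one has $-\mathrm{id}\in W(E_7)$ and $-\mathrm{id}\in W(E_8)$ respectively, realized by the Geiser, respectively Bertini, involution; as $\sigma$ acts on $\Pic(S)$ exactly as that involution does and $\mathrm{Aut}(S)$ acts faithfully on $\Pic(S)$ in these degrees, $\sigma$ must be the Geiser (resp.\ Bertini) involution: these are (v) and (vi).

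For case \textbf{(A)}, the $\sigma$-equivariance of $f$ means that $\sigma$ descends to an involution $\tau$ of $\P^1$. If $\tau$ is nontrivial, I would first show the conic bundle is in fact smooth: a singular fibre over a point not fixed by $\tau$ would, together with its $\sigma$-image, be a pair of disjoint exceptional curves interchanged by $\sigma$, contradicting minimality, and an analysis of the sections of the conic bundle rules out singular fibres over the two $\tau$-fixed points as well; what remains is exactly a smooth $\P^1$-fibration with $f\circ\sigma=\tau\circ f$, which is (i). If $\tau$ is trivial, then $f\circ\sigma=f$; $\sigma$ restricts to an involution of the general fibre, nontrivial since otherwise $\sigma$ would be the identity on a dense open set; and on a singular fibre $C_1\cup C_2$ the two components must be interchanged by $\sigma$, for otherwise one of them would be a $\sigma$-invariant exceptional curve, so $C_1$ and $C_2$ meet at the $\sigma$-fixed point of the fibre. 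This is (ii).

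The main obstacle I anticipate is the detailed bookkeeping in case \textbf{(A)}: cleanly separating (i) from (ii) and, inside (i), eliminating singular fibres when $\tau$ is nontrivial -- equivalently, checking that the degree-$6$ and degree-$7$ del Pezzo surfaces regarded as conic bundles admit no $G$-minimal involution -- requires tracking how $\sigma$ permutes both the fibral $(-1)$-curves and the sections. The lattice computation in case \textbf{(B)}, that $-\mathrm{id}$ on $K_S^{\perp}$ lies in the Weyl group exactly for $d\in\{1,2\}$, is routine, but it must be paired with the facts (also routine, but worth stating) that in those two degrees $-\mathrm{id}$ is realized by the Geiser and Bertini involutions and that $\mathrm{Aut}(S)\hookrightarrow W$ there.
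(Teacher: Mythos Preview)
The paper does not prove this theorem; it is quoted verbatim from Bayle--Beauville \cite{MR1802909} and used as a black box in the argument that follows. So there is no ``paper's own proof'' to compare against.

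That said, your outline is essentially the Bayle--Beauville argument: run the $G$-equivariant MMP with $G=\langle\sigma\rangle$, invoke the Manin--Iskovskikh dichotomy for rational $G$-minimal surfaces, and then split into the conic-bundle case \textbf{(A)} and the rank-one del Pezzo case \textbf{(B)}. Your treatment of \textbf{(B)} is correct: the key point that $-\mathrm{id}$ on $K_S^{\perp}$ lies in the Weyl group only for $E_7$ and $E_8$ is the standard one, and your identification of $\sigma$ with the Geiser/Bertini involution via injectivity of $\mathrm{Aut}(S)\to W$ in degrees $1$ and $2$ is sound. (One small wrinkle: for $d=7$ the root system is $A_1$, and $-\mathrm{id}$ \emph{is} in $W(A_1)$---but $W(A_1)$ acts only on the rank-one root lattice inside the rank-two $K_S^{\perp}$, so no element of the Weyl group acts as $-\mathrm{id}$ on all of $K_S^{\perp}$. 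Your conclusion is right; the phrasing could be tightened.)

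The one genuine gap is the point you yourself flag in \textbf{(A)} with $\tau$ nontrivial: ruling out singular fibres over the two $\tau$-fixed points. If $F_b=C_1\cup C_2$ lies over a $\tau$-fixed point and $\sigma$ swaps $C_1,C_2$, then $C_1\cap\sigma C_1=C_1\cap C_2\neq\emptyset$, so minimality is \emph{not} immediately violated, and neither (i) nor (ii) applies as stated. Bayle--Beauville handle this by a short but nontrivial argument analysing the sections and showing such a configuration forces a contractible $\sigma$-orbit elsewhere; your sketch ``an analysis of the sections of the conic bundle rules out singular fibres over the two $\tau$-fixed points'' is pointing at the right place but is not yet a proof. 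Filling in that paragraph would complete the argument.
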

  
Suppose $S_3$ is a smooth, projective, minimal cubic surface that admits a degree two unirational parameterization.  Then there exists another smooth, projective, minimal  rational surface $S$ with an involution $\sigma$ such that $T=S/<\sigma>$ is birational to $S_3$.  A minimal cubic surface is never birational to a conic bundle, so cases (i) and (ii) above are ruled out for $S$.  We are left to check cases (iii)-(vi).
 
 Consider the quotient $q: S \rightarrow T$ and decompose the fixed locus of $\sigma$ as a union of isolated points $p_i$ and disjoint curves $B_{\ell}$.  Notice that the self intersection of $B_{\ell}$ is never -1 since $S$ is minimal.  The pullback of the anticanonical bundle of $T$ is the sum of an ample and nef divisors, $q^{*}(-K_T)= -K_S + \sum B_{\ell}$.  Thus the surface $T$ is a del Pezzo.  We can easily compute 
 \begin{equation}\label{canon}
 2K_T^2 = K_S^2 - 2K_S\sum B_{\ell} + \sum B_{\ell}^2.
 \end{equation}  Notice that in cases (iii)-(iv) above, $-K_S$ is ample, so all terms in the sum above are positive. 
If $T$ is a cubic surface then equation $(\ref{canon})$ becomes $6= K_S^2 - 2K_S\sum B_{\ell} + \sum B_{\ell}^2.$  Since the terms in the sum are positive, S can only be a degree 2 or 1 del Pezzo surface, (cases (iii) and (iv) have $K_S^2 = 9$ and $8$, respectively).

 Let us first consider the case $K_S^2=2$. Equation $(\ref{canon})$ becomes $4= \sum B_{\ell}^2 - 2K_S \sum B_{\ell}$.  Because $-K_S$ is ample, the  only  possibilities for $- K_S \sum B_{\ell}$  are 2 and 1.    If $- K_S \sum B_{\ell}=2$, then $\sum B_{\ell}^2=0$.  Calculations of the intersection numbers of curves in $S$ show that in this case there is only one curve $B_1$ fixed by $\sigma$ and that curve is a conic passing through four of the exceptional points of $S$.  But then that means there is a conic on the cubic surfaces $T$ and $S_3$.  Finally we arrive at a contradiction to the minimality of $S_3$ since a cubic surface containing a conic also contains a line coplanar to that conic.    
If $- K_S \sum B_{\ell}=1$, then $\sum B_{\ell}^2=2$.  This time calculations of the intersection numbers of curves in $S$ show that this case is not possible.  Thus, S cannot be a degree two del Pezzo surface.

Next we consider the case $K_S^2=1$. Equation $(\ref{canon})$ becomes $5= \sum B_{\ell}^2 - 2K_S \sum B_{\ell}$.  Just as in the case above, the  only  possibilities for $- K_S \sum B_{\ell}$  are 2 and 1.    If $- K_S \sum B_{\ell}=2$, then $\sum B_{\ell}^2=1$.     If $- K_S \sum B_{\ell}=1$, then $\sum B_{\ell}^2=3$.  Intersection number computations show that both cases are not possible.  So $S$ is not a degree 1 del Pezzo, and we have proven the following theorem.

\begin{theorem}  If $S_3$ is a minimal cubic surface defined over a field of odd characteristic, then there does not exist a unirational parameterization of $X$ of degree 2.  
\end{theorem}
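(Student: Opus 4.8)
The plan is to realize a hypothetical degree-two parameterization as the quotient of a rational surface by a biregular involution and then run through the Bayle--Beauville classification stated above, deriving a contradiction with minimality in each surviving case. Suppose $\varphi\colon\P^2\dashrightarrow S_3$ is a degree-two unirational parameterization defined over the odd-characteristic field $k$. Then $k(\P^2)/k(S_3)$ is a quadratic extension; since $\mathrm{char}\,k\neq 2$ it is separable, hence Galois, and its nontrivial automorphism $\sigma$ is a birational involution of $\P^2_k$. A $\sigma$-equivariant resolution followed by equivariant minimalization over $k$ produces a smooth projective rational surface $S$ with a biregular involution $\sigma$ such that $(S,\sigma)$ is a minimal pair and $T:=S/\langle\sigma\rangle$ is $k$-birational to $S_3$. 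Carrying this reduction out so that $(S,\sigma)$, and the relevant form of the characteristic-$\neq 2$ classification, genuinely live over the non-closed field $k$ is one point requiring care.

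Now apply the classification to $(S,\sigma)$. In cases (i) and (ii) the surface $S$, hence $T$, hence $S_3$, would be $k$-birational to a conic bundle; but a $k$-minimal cubic surface is never $k$-birational to a conic bundle (Iskovskikh; see also Manin's treatment of cubic surfaces). So we are in one of cases (iii)--(vi), in which $-K_S$ is nef and $S$ is a del Pezzo surface, possibly $\P^2$ or $\P^1\times\P^1$. Writing $q\colon S\to T$ for the quotient and decomposing $\mathrm{Fix}(\sigma)$ into isolated points and disjoint smooth curves $B_\ell$ (each with $B_\ell^2\neq -1$, by minimality of $S$), the ramification identity $q^{*}(-K_T)=-K_S+\sum_\ell B_\ell$ gives
\begin{equation*}
2K_T^2=K_S^2-2K_S\cdot\!\sum_\ell B_\ell+\sum_\ell B_\ell^2 .
\end{equation*}
Since $T$ is a del Pezzo surface $k$-birational to the minimal cubic $S_3$, we have $K_T^2=3$, so $6=K_S^2-2K_S\cdot\sum B_\ell+\sum B_\ell^2$; as $-K_S$ is nef and each $B_\ell$ is effective the correction terms have controlled sign, forcing $K_S^2\in\{1,2\}$. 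Hence cases (iii) ($K_S^2=9$) and (iv) ($K_S^2=8$) are impossible, and $S$ is a del Pezzo surface of degree $2$ or $1$.

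It remains to eliminate $K_S^2=2$ and $K_S^2=1$ by intersection-number bookkeeping on $S$. For $K_S^2=2$ the identity reads $4=\sum B_\ell^2-2K_S\cdot\sum B_\ell$, and since $-K_S$ is ample of degree $2$ only $-K_S\cdot\sum B_\ell\in\{1,2\}$ can occur. The value $1$ forces a component $B$ with $-K_S\cdot B=1$, hence $B^2=-1$ by adjunction, contradicting minimality of $S$; the value $2$ forces a single fixed curve $B_1$, which the intersection numbers identify as a conic through four of the seven exceptional points of $S$, so its image is a conic on $S_3$ — and a cubic surface containing a conic also contains the coplanar line, contradicting $k$-minimality of $S_3$. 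The case $K_S^2=1$ is handled the same way: $5=\sum B_\ell^2-2K_S\cdot\sum B_\ell$ again leaves only $-K_S\cdot\sum B_\ell\in\{1,2\}$, and in each case the resulting intersection constraints on the fixed curves of a biregular involution of a minimal degree-one del Pezzo surface are unsatisfiable. Exhausting the list proves the theorem.

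The heart of the argument, and the main obstacle, is the case analysis of the last paragraph: one needs a precise description of which effective divisor classes can occur as components of the fixed locus of a biregular involution on a del Pezzo surface of degree $2$ or $1$, a systematic exclusion of the classes of self-intersection $-1$, and, in the single surviving branch, the verification that the fixed conic descends to a genuine conic on $S_3$ which is necessarily coplanar with a line. The remaining delicacy is in the first step — constructing the minimal pair $(S,\sigma)$ and invoking the Bayle--Beauville classification correctly over the non-closed base field $k$ rather than over its algebraic closure.
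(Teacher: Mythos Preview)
Your proposal follows essentially the same route as the paper's argument: reduce the degree-two parameterization to a minimal pair $(S,\sigma)$ with $S/\langle\sigma\rangle$ birational to $S_3$, invoke the Bayle--Beauville classification, eliminate the conic-bundle cases via Iskovskikh/Manin, and then use the ramification identity $2K_T^2=K_S^2-2K_S\cdot\sum B_\ell+\sum B_\ell^2$ together with intersection bookkeeping on degree $1$ and $2$ del Pezzos to rule out the remaining cases. You are in fact more explicit than the paper about two genuinely delicate points---the passage from $\bar k$ to $k$ in the Bayle--Beauville step, and the justification that $K_T^2=3$---which the paper leaves implicit.
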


  \section{Degree 6 Maps for  Cubic Surfaces}
  We start this section by recalling Manin's construction of a degree six unirational parameterization for cubic surfaces over larger enough finite fields.  
  \begin{theorem}[Y. Manin]\label{ThmM}
  Let $S_3$ be a cubic surface defined over a finite field $k$ and suppose that there is a point $p\in S_3(k)$ which is not on an exceptional curve.  Then there exists a  rational map $\varphi: \P^2 \dashrightarrow S_3$ of degree $6$.
  \end{theorem}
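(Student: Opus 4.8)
The plan is to use the tangent plane at $p$ to cut out a rational cubic curve on $S_3$ and then to sweep out $S_3$ by the family of tangent plane sections taken at the points of that curve. Fix a cubic form $F$ with $S_3=\{F=0\}\subset\P^3$, and for $x\in S_3$ write $C_x=T_xS_3\cap S_3$ for the tangent plane section, a plane cubic singular at $x$. Because $p$ lies on no line of $S_3$, the curve $C_p$ is irreducible: a reducible plane cubic contains a line, and such a line would have to pass through the singular point $p$. Hence $C_p$ is a nodal or cuspidal cubic, so it is geometrically rational, and over the finite field $k$ its normalization is a smooth genus-zero curve, hence isomorphic to $\P^1_k$.

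First I would form the incidence surface
\[
I=\{(x,y) : x\in C_p,\ y\in C_x\}
\]
with its two projections $\pi_1\colon I\to C_p$ and $\pi_2\colon I\to S_3$. A general point $x$ of $C_p$ lies on no line of $S_3$ (the finitely many lines meet $C_p$ in finitely many points), so the fibre of $\pi_1$ over such $x$ is the irreducible rational cubic $C_x$; thus $I$ is a geometrically irreducible surface fibred in rational curves over the rational curve $C_p$. The key step --- and the one I expect to be the main obstacle --- is to show that $I$ is $k$-rational, that is, that the generic fibre of $\pi_1$ has a rational point over $k(C_p)$. For this I would use a residual intersection: the line $M_x:=T_xS_3\cap T_pS_3$ is the tangent line to $C_p$ at $x$, so $M_x\cap C_p=2x+r_x$ for a residual point $r_x$, and since $r_x\in M_x\subset T_xS_3$ we have $r_x\in C_x$, with $r_x$ a smooth point of $C_x$ for general $x$. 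Applying this at the generic point of $C_p$ produces a $k(C_p)$-rational point on the normalization of the generic fibre, so that normalization is $\P^1$ over $k(C_p)$; hence $I$ is $k$-birational to $C_p\times\P^1$, and therefore to $\P^1\times\P^1$ and to $\P^2$. (In small characteristic one must also rule out the degenerate case in which every point of $C_p$ is a flex, which would force $r_x=x$.)

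Next I would compute the degree of $\pi_2$. A pair $(x,y)$ lies in $I$ precisely when $x\in C_p$ and $y\in T_xS_3$, that is, when $P_yF(x)=0$, where $P_yF=\sum_i y_i\,\partial_iF$ is the polar quadric of $y$; thus $\pi_2^{-1}(y)$ is identified with $C_p\cap\{P_yF=0\}$, the intersection of $C_p$ with a quadric section of $S_3$. The polars $\{P_yF=0\}$ form a base-point-free linear system --- their base locus in $\P^3$ is the singular locus of $S_3$, which is empty --- so for general $y$ this intersection is transverse and consists of
\[
(-K_{S_3})\cdot(-2K_{S_3})=2K_{S_3}^2=6
\]
reduced points, none equal to $p$ and all avoiding the lines of $S_3$; for general $y$ each such $x$ is moreover a smooth point of $C_x$, contributing a single point of the normalized fibre over it. Hence $\pi_2$ has degree $6$, and precomposing with a $k$-birational map $\P^2\dashrightarrow I$ gives a degree-$6$ parametrization $\varphi\colon\P^2\dashrightarrow S_3$. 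All of the above is carried out over $k$, so $\varphi$ is defined over $k$, as required.
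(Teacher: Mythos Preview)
The paper does not actually prove this theorem: it is stated as a result of Manin (from \emph{Cubic Forms}) and invoked as a black box. Your argument is essentially Manin's classical construction --- the tangent hyperplane section $C_p$ is a rational curve, and the tangent sections $C_x$ for $x\in C_p$ sweep out $S_3$ --- so there is nothing to contrast on the level of strategy.

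Your execution is correct, but one step is more elaborate than it needs to be and this is what forces your small-characteristic caveat. To show the generic fibre of $\pi_1$ is $\P^1$ over $k(C_p)$ you manufacture the residual point $r_x$ on the tangent line $M_x$. This works, but it is unnecessary: the singular point $x$ is already a $k(C_p)$-rational point of the generic fibre $C_\eta$, and projection from that singular point gives a $k(C_p)$-birational map $C_\eta\dashrightarrow\P^1$ directly (a line through the double point meets the cubic residually in a single point). That argument is characteristic-free and sidesteps the ``every point of $C_p$ is a flex'' worry entirely. With that replacement your proof goes through cleanly over every finite field. The degree computation via the polar quadric, identifying $\pi_2^{-1}(y)$ with $C_p\cap\{P_yF=0\}$ and reading off $(-K_{S_3})\cdot(-2K_{S_3})=6$, is correct.
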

Manin goes on to state that a cubic surface defined over a field with at least 34 elements, will contain a point not on an exceptional curve.  We can lower that bound to five.

\begin{theorem}
Let $S_3$ be a minimal cubic surface defined over $\F_q$ with  $q\geq 5$.  Then there exists a point $x\in S_3(\F_q)$ not contained on an exceptional curve.\end{theorem}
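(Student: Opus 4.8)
The plan is to compare a lower bound for $\#S_3(\F_q)$, furnished by the Lefschetz trace formula, with an upper bound for the number of $\F_q$-points that lie on one of the twenty-seven exceptional curves; when $q\geq 5$ the first quantity exceeds the second, which produces the required point.

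I would first record two features of minimality that the argument uses. Since $S_3$ is minimal over $\F_q$ it contains no exceptional line defined over $\F_q$ (such a line is a single $(-1)$-curve, so it could be contracted over $\F_q$, contradicting minimality); and, by the standard description of minimal cubic surfaces, Frobenius acts on $N(S_3)\cong\Z^{7}$ through an element $w\in W(E_6)$ whose only fixed vectors in $N(S_3)\otimes\Q$ are the multiples of $K_{S_3}$ --- equivalently, $w$ has no nonzero fixed vector on the six-dimensional reflection representation $V=(K_{S_3}^{\perp})\otimes\Q$. As $S_3$ is geometrically rational, the Grothendieck--Lefschetz formula gives $\#S_3(\F_q)=q^{2}+q\bigl(1+\operatorname{tr}(w\mid V)\bigr)+1$. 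Running through the finitely many conjugacy classes of $W(E_6)$ that have no fixed vector on $V$ --- these are the classes attached to minimal cubic surfaces, recorded with their characteristic polynomials in the table discussed in the introduction --- one checks that $\operatorname{tr}(w\mid V)\geq -3$, with equality only for the order-three class whose characteristic polynomial on $V$ is $(t^{2}+t+1)^{3}$. Hence $\#S_3(\F_q)\geq q^{2}-2q+1=(q-1)^{2}$.

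For the upper bound I would partition the twenty-seven lines of $S_3\otimes\AF_q$ into Frobenius orbits; by the first feature above, no orbit has size one. Suppose $x\in S_3(\F_q)$ lies on a line $L$ in an orbit of size $d$. Since $x$ is fixed by Frobenius it lies on every conjugate of $L$, hence on $d$ distinct lines of $S_3$; as at most three lines of a smooth cubic surface pass through a single point, $d\leq 3$, and the $d$ lines of the orbit all meet at $x$. Thus each orbit of size $2$ or $3$ accounts for at most one point of $S_3(\F_q)$, and orbits of size at least $4$ for none; writing $n_d$ for the number of size-$d$ orbits, $2n_2+3n_3\leq\sum_d d\,n_d=27$, so $n_2+n_3\leq 13$. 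Consequently at most $13$ points of $S_3(\F_q)$ lie on an exceptional curve.

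Combining the two bounds, for $q\geq 5$ we have $(q-1)^{2}\geq 16>13$, so $S_3(\F_q)$ contains a point lying on no exceptional curve, which is what was wanted. I expect the one genuinely delicate step to be the inequality $\operatorname{tr}(w\mid V)\geq -3$: since $(q-1)^{2}>13$ fails exactly when $q=4$, this bound is sharp and cannot be recovered from slack elsewhere in the argument, so it relies on the explicit list of fixed-point-free conjugacy classes of $W(E_6)$ and their characteristic polynomials rather than on a soft estimate. Everything else is elementary counting --- and it is fitting that the argument stops at $q=5$, since $q=4$ is exactly the case where Manin's original construction fails, as noted in the introduction.
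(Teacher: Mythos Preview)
Your argument is correct, and it shares the paper's overall strategy: bound $\#S_3(\F_q)$ from below via Weil's formula together with Manin's constraint $\operatorname{Tr} F^{*}\in\{-2,-1,0,1,2\}$ for minimal cubics (your $\operatorname{tr}(w\mid V)\geq -3$ is the same statement), and then bound the number of $\F_q$-points lying on exceptional lines from above.

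Where you diverge is in the upper bound. The paper first observes that a rational point on exactly one line forces that line to be $\F_q$-rational, and a rational point on exactly two lines forces the residual third line in their common plane to be $\F_q$-rational; by minimality neither can occur, so every rational point on a line is an Eckardt point. It then invokes Hirschfeld's classification of the possible numbers of Eckardt points ($\leq 18$ in odd characteristic, $\leq 45$ in characteristic two) and checks case by case that $q^{2}+q\operatorname{Tr} F^{*}+1$ never lands in the admissible list once $q\geq 5$. Your Frobenius-orbit count replaces all of this: the observation that a rational point on a line must lie on the entire Galois orbit of that line immediately forces the orbit size to be $2$ or $3$ and shows each such orbit contributes at most one rational point, yielding the uniform bound $13$ without any appeal to the Eckardt enumeration or the coplanar-line trick. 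This is cleaner and self-contained; the paper's route, on the other hand, yields the extra structural information that the bad points are exactly Eckardt points, which it then uses when treating the leftover cases $q\in\{2,3,4\}$ via Swinnerton-Dyer's classification.
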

\begin{proof}
Let $S_3$ be a minimal cubic surface. Suppose all the points defined over $\F_q$ lie on exceptional curves and let $x\in S_3(\F_q)$.  If $x$ is contained in exactly one exceptional line, then the line is defined over $\F_q$.    If $x$ is contained in exactly two exceptional curves, they must be Galois conjugates defined over $\F_{q^2}$.  Consider the plane $P$ spanned by the two lines.  The intersection of the plane and the surface is three lines.  The third line must be defined over the ground field.  In both cases the surfaces in not minimal and contradicts our hypothesis.

Thus, $x$ must be contained in three exceptional curves and is called an Eckardt point.  A cubic surface defined over a field of odd characteristic can have 1, 2, 3, 4, 6, 9, 10 or 18 Eckardt points \cite{MR0233272}.  Over a field of characteristic two the possibilities are 1, 3, 5, 9, 13 and 45 \cite{MR0233272}.  
Let $q=p^r$ and fix an algebraic closure  $\AF_q$, let  $G=$ Gal $(\AF_q, \F_q)$ and $F\in G$  the Frobenius automorphism sending an element $z$ to $z^p$.    Let $N(S_3)=$ Pic $(S_3 \otimes \AF_q)$ and $F^*: N(S_3) \rightarrow N(S_3)$ be the action of Frobenius on the Picard group.  
Weil proved that the following formula for the number of $\F_q$ rational points on a cubic surface \cite{ MR0092196};
$$|S_3(\F_q)|= q^2+ q \; \T F^*+1. $$ 
If the surface $S_3$ is minimal, then the possibilities for $\T F^*$ are $-2, -1, 0, 1$ and $2$ \cite{MR833513}.  A simple computation shows that the only fields in which all points on the surface can be Eckardt points are $\F_2, \F_3$ and $\F_4$.  
\end{proof}
\begin{cor}
Let $S_3$ be a cubic surface defined over a finite field with at least five elements.  Then the minimal degree of a unirational parameterization of $S_3$ is at most six.
\end{cor}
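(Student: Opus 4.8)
The plan is to split on whether $S_3$ is minimal over $\F_q$, reducing each case to a result already in hand. Suppose first that $S_3$ is minimal. This is precisely the hypothesis of the theorem just proved, so (this is where $q \geq 5$ enters) there is a point $x \in S_3(\F_q)$ lying on no exceptional curve; inserting $x$ into Manin's construction, Theorem \ref{ThmM}, produces a rational map $\P^2 \dashrightarrow S_3$ of degree $6$, which settles this case.

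Now suppose $S_3$ is not minimal. Then $S_3$ carries a Galois-stable set of pairwise disjoint exceptional lines, and contracting it defines an $\F_q$-birational morphism $\pi\colon S_3 \to S'$ onto a del Pezzo surface $S'$. I claim its degree $d = K_{S'}^2$ satisfies $4 \le d \le 9$: at least one line is contracted, a smooth cubic surface contains at most six pairwise disjoint lines, and contracting disjoint $(-1)$-curves preserves the del Pezzo property while raising $K^2$ by the number of contracted curves, so $d = 3 + (\text{number of lines contracted})$. If $d \ge 5$ then $S'$ is rational over $\F_q$, as recalled in the introduction; if $d = 4$ then Theorem \ref{Thm2} furnishes a degree $2$ unirational parameterization of $S'$. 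Composing either parameterization with the birational inverse of $\pi$ then yields a unirational parameterization of $S_3$ of degree at most $2$.

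Combining the two cases proves the corollary, and in fact shows the minimal degree is at most $2$ unless $S_3$ is minimal, in which case it is at most $6$ by Theorem \ref{thm1}. There is no genuinely hard step here; the only points needing a word of justification are the behaviour of $K^2$ under contraction of a Galois orbit of disjoint lines on a cubic surface, which is classical, and the rationality over finite fields of del Pezzo surfaces of degree $\ge 5$, which is quoted in the introduction.
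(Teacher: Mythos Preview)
Your argument is correct and follows the same two-case split as the paper: minimal cubics via the preceding theorem plus Manin's degree-six construction (Theorem~\ref{ThmM}), and non-minimal cubics via contraction to a higher-degree del Pezzo together with Theorem~\ref{Thm2}. Your handling of the non-minimal case is more explicit than the paper's one-line invocation of Theorem~\ref{Thm2}, in that you track the degree $4\le d\le 9$ of the contracted surface and separately note rationality for $d\ge 5$, but the route is the same.
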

\begin{proof}
If $S_3$ is minimal, there is a point not contained on the exceptional curves, and Theorem  \ref{ThmM}  gives a degree six unirational map.  If $S_3$ is not minimal, then Theorem  \ref{Thm2}  gives a degree two unirational map. 
\end{proof}

Swinnerton-Dyer  classified cubic surfaces not containing a line and having only Eckardt points as rational points \cite{MR619244}.   He proved that these surfaces only exist when $q=2$ or $4$.  

Over the field $\F_2$ there is a unique cubic surface with one point.  The surface is defined by the equation,
\begin{equation}\label{C1}
 Y^3+Y^2Z+Z^3+W(X^2+Y^2+YZ+Z^2)+XW^2+W^3=0.
 \end{equation} The surface contains only one point $[1,0,0,0]$ over $\F_2$, and that point is an Eckardt point.  The lines on the surface are all defined over the degree three extension $\F_8$ where all 121 points of the surface are contained on the lines and thirteen of the points are Eckardt points.   Also over the field of two elements there is a unique cubic surface with three Eckardt points as its rational points.  This surfaces is given by the equation,
\begin{equation}\label{C2}XY(X+Y)+Z^3+Z^2W+W^3=0.\end{equation}  It is a rational surface, but splits over $\F_{64}$.   Fifteen of the lines are defined over $\F_8$ and the other twelve over $\F_{64}$.

Swinnerton-Dyer found two inequivalent surfaces over $\F_4$.  If we let the elements of $\F_4$ be $0,1,\alpha$ and $\alpha+1$, where $\alpha^2 + \alpha + 1=0$, then the equations of the two surfaces are,
\begin{equation}\label{C3} X^3+Y^3+Z^3 +\theta W^3=0 \textrm{ where } \theta= \alpha \textrm{ or }  \alpha+1.\end{equation}  These surfaces each have nine rational points over $\F_4$, and they are all Eckardt points.   The lines on these surfaces are defined over the degree six extension $\F_{64}$.

In order to find a degree six unirational parameterization of these four surfaces, we consider points over degree two extensions instead of just points over the ground field.  Koll\'ar gave the following degree six unirational map \cite{MR1956057}.
  \begin{theorem}[J. Koll\'ar]\label{thmK}
  Let $S_3$ be a cubic surface defined over a finite field $\F_q$.  Fix a point $x\in S_3(\F_q)$ and a line $L \in \P^3$ not contained in the surface. Then $L\cap S_3 = \{x, s, s'\}$ where $s,s' \in S_3(\F_{q^2})$ are Galois conjugate points.  If there exists an $x\in S_3(\F_q)$ such that $s$ and $s'$ do not lie on the exceptional curves of $S_3$, then  there exists a  dominant rational map $\varphi: \P^2 \dashrightarrow S_3$.  
  \end{theorem}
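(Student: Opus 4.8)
The approach I would take is to reduce to Manin's construction over the quadratic extension and then descend: the $\F_{q^2}$-point $s$ produces a degree six parametrization by Theorem~\ref{ThmM}, and the $\F_q$-rational point $x$ lying on the line $L$ through the conjugate pair $\{s,s'\}$ is exactly the extra datum needed to push that parametrization back down to $\F_q$.

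First I would base change to $K=\F_{q^2}$. By hypothesis $s\in S_3(K)$ lies on none of the twenty-seven lines of $S_3\otimes\AF_q$, so in particular $s$ is not an Eckardt point and the tangent plane section $T_sS_3\cap S_3$ is an irreducible plane cubic with a single node or cusp at $s$, hence a rational curve defined over $K$. Theorem~\ref{ThmM} therefore applies to $(S_3)_K$ with the point $s$ and gives a dominant rational map $\psi_s\colon\P^2_K\dashrightarrow(S_3)_K$ of degree six, realized through the $\P^1$-bundle of tangent directions of $S_3$ along that nodal cubic. Applying the non-trivial element $\sigma$ of $\Gal(K/\F_q)$, which interchanges $s$ and $s'$, produces the conjugate dominant map $\psi_{s'}=\psi_s^\sigma\colon\P^2_K\dashrightarrow(S_3)_K$, again of degree six, built from the tangent plane section at $s'$. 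Neither map descends to $\F_q$, but the pair $\{\psi_s,\psi_{s'}\}$ is $\sigma$-stable.

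Next I would descend along $K/\F_q$. Since $K/\F_q$ is separable, the Weil restriction $R_{K/\F_q}\P^2_K$ contains $\mathbb A^4_{\F_q}$ as a dense open subset and so is a rational fourfold over $\F_q$, and $R_{K/\F_q}\psi_s$ is a dominant rational map from it to $R_{K/\F_q}(S_3)_K$ whose base change to $K$ is $\psi_s\times\psi_{s'}$. Pulling this back along the canonical diagonal embedding $S_3\hookrightarrow R_{K/\F_q}(S_3)_K$ yields an $\F_q$-variety $Z$ carrying a rational map to $S_3$ over $\F_q$; over $K$ the variety $Z$ is the fibre product $\P^2_K\times_{(S_3)_K}\P^2_K$ formed from $\psi_s$ and $\psi_{s'}$, and since both of those maps are dominant, every sufficiently general point of $S_3$ is hit by both, so $Z\to S_3$ is dominant over $\F_q$. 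It then remains to realize $Z$ — or a component, or an $\F_q$-birational model — as a rational surface over $\F_q$: this is where the line $L$ is indispensable, since the collinearity $\{x,s,s'\}=L\cap S_3$, with $x$ the unique $\F_q$-point of the triple, pins the two conjugate $\P^1$-bundles together along their fibres over $s$ and $s'$ and should identify $Z$ birationally with a conic bundle over $\P^1$ possessing an $\F_q$-rational section coming from $x$; a conic bundle over $\P^1$ with a section is rational over the ground field, and composing $\P^2_{\F_q}\dashrightarrow Z\to S_3$ then gives $\varphi$.

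The main obstacle is exactly this last step: controlling the $\F_q$-birational type of the descended correspondence and verifying that the section supplied by $x$, together with the collinearity data, really does trivialize the relevant conic bundle — and that dominance survives and no geometrically non-rational or spurious components intervene. Everything hinges on the two geometric inputs in the hypothesis: that $s$ and $s'$ avoid the twenty-seven lines makes Manin's construction available over $K$ with honest nodal tangent sections, and the collinearity of $x$, $s$, $s'$ on $L$ is what permits the two conjugate halves to be glued over $\F_q$. As an alternative route one may instead project $S_3$ from $L$: after blowing up $\{x,s,s'\}$ this exhibits $S_3$ as a genus-one fibration over $\P^1$ carrying an $\F_q$-rational section from $x$ together with a conjugate pair of sections from $s$ and $s'$ lying over collinear points, and one would aim to extract unirationality over $\F_q$ from the interplay of these sections.
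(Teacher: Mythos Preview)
Your approach diverges from the paper's, and the divergence leaves a genuine gap.

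The paper does not reprove Koll\'ar's theorem; it cites \cite{MR1956057} for the construction and then checks that the resulting map has degree six. That construction is the \emph{third point map}: with $C_s=T_sS_3\cap S_3$ and $C_{s'}=T_{s'}S_3\cap S_3$ the (rational, since $s,s'$ avoid the twenty-seven lines) tangent sections, one sends $(a,b)\in C_s\times C_{s'}$ to the residual intersection of the line $\overline{ab}$ with $S_3$. Since $\Gal(\F_{q^2}/\F_q)$ swaps $s\leftrightarrow s'$ and hence the two factors, this map descends to $\F_q$; the domain is the $\F_q$-form of $\P^1_{\F_{q^2}}\times\P^1_{\F_{q^2}}$ with Galois exchanging the factors, i.e.\ birational to $R_{\F_{q^2}/\F_q}(\P^1)$, which is rational over $\F_q$ because it contains $R_{\F_{q^2}/\F_q}(\mathbb{A}^1)=\mathbb{A}^2_{\F_q}$ as an open set. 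The paper's count of preimages of a general $z$ via the projection $\pi_z$ then gives degree six.

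Your route Weil-restricts not the curve $C_s$ but Manin's entire degree-six map $\psi_s$, and pulls back along the diagonal. The resulting $Z$ dominates $S_3$ with degree $36$, not $6$, and you must still show that $Z$ (or an $\F_q$-component) is rational over $\F_q$. You flag this yourself as ``the main obstacle'' and offer only a heuristic --- a hoped-for conic-bundle structure on $Z$ with a section supplied by $x$ --- without exhibiting the fibration, identifying its fibres as conics, or constructing the section. That is the gap. You also misread the role of $x$ and $L$: in Koll\'ar's construction $x$ serves only to \emph{produce} a conjugate pair $\{s,s'\}$ lying off the lines; once such a pair exists, descent of the third-point map is automatic from the Galois-stability of $\{C_s,C_{s'}\}$, with no further appeal to $x$ or to collinearity. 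The Weil-restriction instinct is correct, but it should be applied to the rational curve $C_s$ directly, not to the already-assembled surface parametrization $\psi_s$.
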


  We will assume that the surface is minimal, since we have already seen how to produce a degree two parameterization for non-minimal surfaces.  We can  see that the degree of the 
 the third point map is  six as follows.
 
Pick $z\in S_3$ such that $z\notin C_{s} \cup C_{s'}$.  Let $\pi_z : \P^3 \backslash \{z\} \dashrightarrow T_{s'}S_3$ be the projection from $Z$ onto the tangent plane of $S_3$ at $s'$.  Since $\pi_z(C_s)$ is a cubic curve in $T_{s'}S_3$, it will intersect $C_{s'}$ in nine points, counting multiplicity.   Three of the points lie on the line $\L= T_sS_3 \cap T_{s'}S_3$,  and they are the intersection of our two rational curves and are fixed by $\pi_z$.  Since $\L$ is defined over $\F_q$ and $S_3$ is minimal, $\L$ does not lie on $S_3$.  It should be noted that $p \notin \L$ because if it were, $p$ would also be contained in $T_sS_3$.  Then $\L$ and $s'$ would be contained in $T_sS_3$, and thus $T_sS_3=T_{s'}S_3$.  Since $\L$ does not contain any $\F_q$ points we know that $\L \cap S_3$ consists of three $\F_{q^3}$ points on $C_s$ and $C_{s'}$.  Since the tangent lines to these points lie in different planes, we know that $C_s$ and $C_{s'}$ intersect transversely.  It would take a very special $z$ to introduce multiplicity after projection.  So generically there are three points of intersect all of multiplicity one.  These three points are in the indeterminacy locus of the map $\varphi$ since they are on both $C_s$ and $C_{s'}$, so we do not count them when counting the degree of $\varphi$.    Thus the degree of $\varphi$ is six.  

In order to finish the proof of  Theorem \ref{thm1}, we must show that the exceptional cases above satisfy the assumptions of Theorem \ref{thmK}.  These surfaces defined by equations (\ref{C1}) and (\ref{C3}) are in the conjugacy class $C_{11}$ of Frame \cite{MR0047038},  which means they split over a degree three extension and contain no lines over $\F_q$ or $\F_{q^2}$.  Over a degree two extension, they contain $q^4-2q^2+1$ points with $q^2-2q+1$ of them lying on exceptional lines.
Thus there are $q(q+2)(q-1)^2$ $\F_{q^2}$ points on the surface away from the exceptional locus.  So the assumptions of Koll\'ar's theorem are met, and there exists a degree six unirational parameterization.

The surface defined by equation (\ref{C2}) is in Frame's conjugacy class $C_{22}$.  As noted above, the surface is rational.

\bibliographystyle{alpha}
\bibliography{CubicBib}
    \end{document}